\newif\ifpreprint
\newcommand{\theTitle}{Learning Rationality in Potential Games}
\newcommand{\theAbstract}{We propose a stochastic first-order algorithm to learn the \emph{rationality} parameters of simultaneous and non-cooperative potential games, \ie, the parameters of the agents' optimization problems. Our technique combines an active-set step that enforces that the agents play at a Nash equilibrium and an implicit-differentiation step to update the estimates of the rationality parameters. We detail the convergence properties of our algorithm and perform numerical experiments on Cournot and congestion games. In practice, we show that our algorithm effectively finds high-quality solutions with minimal out-of-sample loss and scales to large datasets.}
    \let\NAT@parse\undefined
\pgfplotsset{compat=1.18}
\newcommand{\new}[1]{\textcolor{black}{#1}}
\newcommand{\eg}{{\it e.g.}}
\newcommand{\ie}{{\it i.e.}}
\newcommand{\nagent}{n}
\newcommand{\param}{\theta}
\newcommand{\gameparam}{\mu}
\newcommand{\mini}{\text{\rm minimize}}
\newcommand{\reals}{{\mbox{\bf R}}}
\newcommand{\Expect}{\mathop{\bf E{}}}
\newcommand{\loss}{\mathcal{L}}
\newtheorem{theorem}{Theorem}[section]  % Restart counter every section
\newtheorem{proposition}{Proposition}[section]
\newtheorem{assumption}{Assumption}
\newtheorem{definition}{Definition}[section]
\newtheorem{zrule}{Rule}
\crefname{lemma}{Lemma}{Lemmata}
\crefname{theorem}{Theorem}{Theorem}
\crefname{claim}{Claim}{Claims}
\crefname{proposition}{Proposition}{Propositions}
\crefname{algorithm}{Algorithm}{Algorithms}
\crefname{equation}{}{}
\crefname{definition}{Definition}{Definitions}
\crefname{Cla}{Claim}{Claim}
\crefname{corollary}{Corollary}{Corollaries}
\crefname{remark}{Remark}{Remarks}
\crefname{example}{Example}{Examples}
\crefname{figure}{Figure}{Figures}
\crefname{section}{Section}{Sections}
\crefname{table}{Table}{Tables}
\crefname{enumi}{Statement}{Statements}
\crefname{line}{Step}{Steps}
\crefname{zrule}{Rule}{Rules}
\crefname{assumption}{Assumption}{Assumptions}
\newcommand{\para}[1]{
    \ifpreprint
        \paragraph{#1.}
    \else
        \subsection{#1}
    \fi
}
    \definecolor{lime}{HTML}{A6CE39}
    \DeclareRobustCommand{\orcidicon}{
    	\begin{tikzpicture} \draw[lime, fill=lime] (0,0) circle [radius=0.16] node[white] { {\fontfamily{qag}\selectfont \tiny ID} };
    	\draw[white, fill=white] (-0.0625,0.095) circle [radius=0.007];
    	\end{tikzpicture} \hspace{-2mm}
    }
\begin{document}

\ifpreprint
    \title{\theTitle}
    \author{\small{Stefan Clarke\orcidA{}$^1$, Gabriele Dragotto\orcidB{}$^1$, Jaime Fern\'{a}ndez Fisac\orcidC{}$^2$,} \\ 
    \small{and Bartolomeo Stellato\orcidD{}$^1$}\\[.5em]
    \small{\textit{$^1$ Department of Operations Research and Financial Engineering, Princeton University}}\\
    \small{\textit{$^2$ Department of Electrical and Computer Engineering, Princeton University}}}
    
\else
    \title{\theTitle}
    \author{Stefan Clarke$^1$, Gabriele Dragotto$^1$, Jaime Fern\'{a}ndez Fisac$^2$, and Bartolomeo Stellato$^1$\\[.5em]
    $^1$ Department of Operations Research and Financial Engineering, Princeton University, U.S.A.\\
    $^2$ Department of Electrical and Computer Engineering, Princeton University, U.S.A.}
\fi

\maketitle

\begin{abstract}%
\theAbstract
\end{abstract}

\section{Introduction}

Decision-making is rarely an individual task; on the contrary, it often involves several self-driven interacting agents. Non-cooperative game theory provides the ideal playground to model the outcome of multi-agent decision-making.
Among these outcomes, the most popular one for \emph{non-cooperative} and \emph{simultaneous} games is arguably the Nash equilibrium \citep{nash_equilibrium_1950}. Intuitively, whenever the game's agents play Nash equilibrium strategies, no agent has the incentive to deviate to another strategy, and the multi-agent system is stable. \citet{monderer_potential_1996} proved that Nash equilibria are guaranteed to exist in \emph{potential} games, a large subclass of non-cooperative and simultaneous games;
specifically, if the \emph{rationality parameters} of a potential game, \ie, the parameters describing the agents' payoffs and strategy sets, are common information and observable, there always exists a Nash equilibrium that is an extremum of a so-called \emph{potential function}. Several games with practical applications can be modeled as \emph{potential games}, for example, congestion games, Cournot games~\cite{monderer_potential_1996}, and certain urban driving games~\cite{zanardi_urban_2021}.

However, the agents' self-driven behavior frequently conflicts with the greater societal goals; in such cases, external regulators (\eg, governments) can intervene in the agents' interaction via incentives, laws, and regulations. For instance, the transportation company of a large city adds or removes subway rides starting from its understanding of the commuters' preferences. Therefore, accurately estimating the rationality parameters as a proxy of the agents' behavior is arguably the most critical ingredient for deploying effective regulatory interventions.
In the specific case of potential games, if the external regulator can directly observe the rationality parameters, any extrema of the potential function is a Nash equilibrium.
Assuming that the regulator can directly observe such rationality parameters is, however, rather unrealistic; in several contexts, these parameters are often unknown and not directly observable. 
Although the rationality parameters are seldom observable, outcomes such as Nash equilibria are often observable through data. The observability of outcomes, as opposed to the observability of the rationality parameters, is the starting assumption of the techniques we present here. In this paper, we propose an algorithm that learns the rationality parameters starting from some data containing the outcomes (\ie, Nash equilibria) of the past agents' interactions. Our algorithm enables external regulators to learn the agents' rationality parameters from data to deploy effective regulatory interventions. We assume that 
each agent solves a convex quadratic optimization problem parametrized in some unknown \emph{rationality} parameters and some \emph{context} parameters (\eg, some observable environmental conditions), and, further,
that there exists a convex quadratic potential function in the agents' variables.

The majority of the learning literature focused on the problem of learning the optimal strategy from the perspective of the agents \citep{daskalakis_near_2021,hartline_noregret_2015,kleinberg_multiplicative_2009,krichene_online_2015}. In this paper, we study the less explored topic of learning the rationality parameters from the perspective of an external regulator, as in \citep{fabiani_learning-based_2022,li_endtoend_2020,heaton_learn_2021,bertsimas_data-driven_2015,allen_using_2020}. In contrast with the previous literature, our algorithm:
\begin{enumerate*}[label=(\alph*.)]
\item learns rationality parameters that induce \emph{exact} and \emph{exact generalized} Nash equilibria, instead of approximate equilibria or Nash equilibria \citep{li_endtoend_2020,heaton_learn_2021,bertsimas_data-driven_2015}, and
\item is an \emph{anytime} algorithm, \ie, the intermediate estimates always induce exact Nash equilibria, as opposed to~\citep{fabiani_learning-based_2022,li_endtoend_2020,heaton_learn_2021,bertsimas_data-driven_2015,allen_using_2020}, and
\item learns both the agents' utilities and constraints, as opposed to only utilities~\citep{fabiani_learning-based_2022}, and
\item scales to high-dimensional datasets.
\end{enumerate*}
We summarize our contributions as follows:

\begin{enumerate}
\item We propose a stochastic \emph{active-set-based} first-order algorithm to learn the rationality parameters of the agents of a potential game given a dataset of past observed equilibria and context parameters. 
\item We provide a detailed analysis of the asymptotic convergence properties of our algorithm. 
\item We showcase the effectiveness and efficiency of our algorithm via computational experiments on Cournot and congestion games. 
\end{enumerate}
The code associated with this work is available at \url{https://github.com/stellatogrp/learning_rationality_in_potential_games}.

\section{Related work}
\label{sec:review}

\para{Learning in games} The majority of the learning literature focuses on the problem of learning optimal strategies from the perspective of the agents~\citep{daskalakis_near_2021,hartline_noregret_2015,kleinberg_multiplicative_2009,krichene_online_2015}. 
These approaches are useful to determine suitable agent strategies, yet it is hard to use these techniques to determine how to correctly \emph{intervene} in the game to achieve a specific purpose.
In this paper, we study the less explored topic of learning the parameters of the game, \ie, affecting the utilities and the strategies of the agents, from the perspective of an external observer~\citep{fabiani_learning-based_2022}. 
In contrast with~\citep{vorobeychik_learning_2007,ling_what_2018,waugh_computational_2011}, our approach 
does not employ normal or extensive-form formulations. Instead, we let each agent decide their strategy by solving a parametrized optimization problem, as in \citep{facchinei_generalized_2007, Dragotto_2022_Thesis}. Furthermore, compared to \citep{fabiani_learning-based_2022}, we allow agents to have individual constraints parametrized in both rationality and context parameters.

\para{Inverse equilibrium} Our work is closely related to the \emph{inverse equilibrium problem}, \ie, the problem using data to estimate the parameters affecting the game. 
Several works tackle these problems using variational inequalities~\citep{bertsimas_data-driven_2015} and implicit neural networks~\citep{heaton_learn_2021}, with applications in reinforcement learning~\cite{10.5555/645529.657801}, decision making~\cite{jibang_inverse_2022}, and motion planning~\citep{liu_learning_2022, GROVER2022167}.
However, these works often 
do not scale up to large datasets and do not provide anytime algorithms.
In contrast, our method models the inverse learning task as a (nonconvex quadratic) optimization problem, and, in this sense, it is closer to~\citep{agrawal_learning_2021} in the context of learning the parameters of convex optimization models. Similarly to \citep{jin2022learning}, we employ a complementarity problem for our learning task; however, our algorithm enforces exact complementarity at each iteration whereas~\cite{jin2022learning} incorporates it as a violation-based penalty function.
\vspace{-0.1em}
\new{
\para{Stochastic Methods}
Several authors employed stochastic gradient descent-type methods to estimate the parameters in games. Geiger and Straehle~\cite{geiger2022} predict rationality parameters in urban driving games using a neural network to anticipate the future trajectories of vehicles. They a neural network, the ``preference revelation net'', to map car trajectories to rationality parameters for each driver. The authors also optimize the neural network using the implicit function theorem. In particular, they train a neural network to predict the ``rationality'' parameters for a given class of trajectory games, specifically driving games. In contrast, in our work, we directly estimate thegame parameters without relying on the predictions from a neural network, and we consider quadratic potential games. 
%, and we do not specifically focus on driving games. 
Similarly, ~\cite{jibang_inverse_2022} model an inverse-equilibrium task in motion planning problems and focus on quantal response equilibrium, \ie, a different equilibrium concept. In contrast to the previous works, we model the learning task as an optimization problem with complementarity constraints and consider Nash equilibria.
Several works in bilevel optimization, \eg, \cite{zucchet2022beyond,arbel2022amortized}, combine stochastic methods and implicit differentiation to solve bilevel (\ie, nested) optimization problems; while these techniques do not focus on simultaneous games, they may be extended to estimate the rationality parameters in this setting.
}

\section{Problem formulation} 
\label{sec:formulation}
We consider a class of potential games among $\nagent$ \new{agents} solving optimization problems as in \cref{def:game}. We assume there are known and observable context parameters $\gameparam$ and some unknown rationality parameters $\param$, which we seek to infer. 
\begin{definition}[Mathematical programming game]
Let $\theta \in \reals^p$ be the vector of rationality parameters, and $\gameparam \in \reals^k$ be the vector of context parameters.
We consider a \emph{non-cooperative} and \emph{simultaneous} game among $\nagent$ agents such that each agent $i=1,\dots,\nagent$ solves the (parametrized) optimization problem
\begin{equation} \label{eq:formulation}
	\begin{array}[t]{ll}
		\underset{x_i \in \reals^m}{\mini} &  u_i(x_i;x_{-i}, \param, \gameparam) \\
		\text{\rm subject to} &   B_i(\theta, \gameparam) x_i + D_i(\theta, \gameparam) x_{-i} \leq b_i(\theta, \gameparam),
	\end{array}
\end{equation}
where 
$x \in \reals^{mn}$ is the strategy profile made of the decisions of all agents, including other agents' decisions $x_{-i}$ and the decisions~$x_i$ of~$i$; $B_i(\theta, \gameparam) \in \reals^{l \times m}$, $D_i(\theta, \gameparam) \in \reals^{l \times (n-1)m}$ and $b_i(\theta, \gameparam) \in \reals^{l}$; and
 $u_i(x_i;x_{-i}, \param, \gameparam)$ is a convex quadratic function in both $x_i$ and $x_{-i}$.
\label{def:game}
\end{definition}

Without loss of generality, each agent controls~$m$ variables, and, given the other agents' choices and parameters $\gameparam$, $\theta$, it optimizes a convex quadratic function subject to polyhedral constraints.
We say that $x^\star$ is a (generalized) Nash equilibrium of~\cref{eq:formulation} if it satisfies~\cref{def:Nash}.

\begin{definition}[Nash equilibrium]
A strategy profile $x^\star=(x^\star_1,\dots,x^\star_\nagent)$ is a Nash equilibrium for some given $\param$ and $\gameparam$ if, for any agent $i$, $u_i(x^\star_i;x^\star_{-i}, \param, \gameparam) \le u_i(x_i;x^\star_{-i}, \param, \gameparam)$ for any $x_i \in \{B_i(\param \new{\new{, \mu}}) x_i + D_i(\param \new{, \mu}) \new{x_{-i}^\star} \leq b_i(\param \new{, \mu}) \}$.
\label{def:Nash}
\end{definition}
We assume that the game is potential as in \cref{def:potential}.
\begin{definition}[Potential game \new{\cite{monderer_potential_1996}}]
\label{def:potential}
    A game of the form \ref{def:game} is a potential game if there exists a function $\Phi(x; \param, \gameparam)$ such that $x^\star$ is a Nash Equilibrium of the game if and only if $x^\star$ is the optimal solution of 
    \begin{equation} \label{eq:potential}
    \begin{array}{ll}
		\underset{x \in \reals^{mn}}{\mini} &  \Phi(x; \param, \gameparam) \\
		\text{\rm subject to}  &   B_i(\theta, \gameparam) x_i + D_i(\theta, \gameparam) x_{-i} \leq b_i(\theta, \gameparam) \\
        &  \hfill i=1,\dots,\nagent.
	\end{array}
    \end{equation}
\end{definition}
We assume that the potential function $\Phi(x; \param, \gameparam)$ is a convex quadratic function in $x$; \new{this implies, in most cases, that the agents have quadratic utility functions. Several games in economics~\cite{NBERw24914}, navigation~\cite{rosenthal_class_1973}, driving~\cite{zanardi_urban_2021} and robotics~\cite{robotics} involve agents with quadratic utility functions. } Let $R$ and $c$ be a matrix and a vector of appropriate dimensions, respectively. Then,
\begin{align*}
    \Phi(x; \param, \gameparam) &= (1/2)x^T R(\param, \gameparam) x + c(\param, \gameparam)^T x.
\end{align*}
Let the complementarity operator $\clubsuit \perp \spadesuit$ be equivalent to $ \clubsuit ^{\new{T}} \spadesuit=0$.
By taking the Karush–Kuhn–Tucker conditions of the problem of \cref{def:potential}, a vector $x^\star$ is a Nash equilibrium if and only if there exists a $\lambda^\star$ such that 
\begin{align*}
    & 0 = R(\param, \gameparam) x^\star + c(\param, \gameparam) + A(\param, \gameparam)^T \lambda^\star, \label{eq:learning_problem1:c1} \\
    & 0 \leq b(\param, \gameparam) - A(\param, \gameparam) x^\star \perp \lambda^\star \geq 0,
\end{align*}
where we define $A(\param, \gameparam)$ to be such that the inequality $0 \leq b(\param, \gameparam) - A(\param, \gameparam) x$ is equivalent to $B_i(\theta, \gameparam) x_i + D_i(\theta, \gameparam) x_{-i} \leq b_i(\theta, \gameparam)$ for every agent $i=1,\dots,\nagent$. 
%\bnote{define compl once}

\section{Our method}
\label{sec:method}
In this section, we describe the fundamental ingredients for our \new{active-set based} learning algorithm. \new{By \emph{active-set-based}, we mean that the algorithm identifies a subset of the active constraints and fixes them to $0$ (\ie, their bound) at each iteration to perform implicit differentiation of the loss function.}
Given a dataset $\mathcal{D}=\{(\bar{x}^k, \bar{\gameparam}^k)\}^K_{k=1}$ where the agents play Nash equilibria $\bar{x}^k$ for every $k=1,\dots,K$, our algorithm learns the parameters $\param$ such that each $(\bar{x}^k, \bar{\gameparam}^k)$ is as close to a Nash equilibrium as possible, \eg, so that the data $\mathcal{D}$ minimizes the prediction-based loss function with respect to the Nash equilibria induced by $\param$. For each datapoint $k \in \{1,\dots,K\}$, let $x^k= x^k(\param, \bar{\gameparam}^k)$ be the Nash equilibrium induced by some parameters $\param \in \Theta$, where $\Theta$ is the space of the rationality parameters. We will consider the loss function over $\mathcal{D}$ given by,
\begin{equation}
{\textstyle \loss(\param; \mathcal{D}) = (1 / |\mathcal{D}|) \sum_{\bar{x}^k \in \mathcal{D}} \parallel x^k - \bar{x}^k \parallel^2_2.}
\label{eq:loss_preliminary}
\end{equation}
To compute the points $x^k$, we devise a bilevel problem enforcing that $x^k$ is a Nash equilibrium given $\param$.

\subsection{Bilevel formulation}
\label{sub:bilevel}
For the given data $\mathcal{D}$ and \new{a closed, convex and known} set of possible parameters $\Theta$, we aim to find the parameters $\param \in \Theta$ that minimize, for each $k =1,\dots,K$, the squared $L_2$-norm of $x^k - \bar{x}^k$. Equivalently, we aim to find the $\param$ that solves the optimization problem
\begin{subequations}
    \label{eq:learning_problem1}
    \begin{align}
    		\!\!\!\! \underset{x^k, \lambda^k, \param}{\mini} \!\!\! \quad & \textstyle (1/K) \sum_{k=1}^K \| x^k - \bar{x}^k \|^2_2 \label{eq:learning_problem1:obj} \\
    		\!\!\!\!\! \text{subject to} \!\!\! \quad & 0 = R(\param, \bar{\gameparam}^k) x^k + c(\param, \bar{\gameparam}^k) + A(\param, \bar{\gameparam}^k)^T \lambda^k, \nonumber \\
        & 0 \leq b(\param, \bar{\gameparam}^k) - A(\param, \bar{\gameparam}^k) x^k \perp \lambda^k \geq 0   \nonumber  \\ & x^k \in \reals^{m n}, \lambda^k \in \reals^{l n}_+\quad  k = 1, \dots, K, \label{eq:learning_problem1:c1}\\
        & \param \in \Theta.
    \end{align}
\end{subequations}
The variables $x^k \in \reals^{n m}$ and $\lambda^k \in \reals^{l n}_+$ are the strategy profile and the dual variables associated with the agents' optimization problem, respectively, while $\theta$ are the variables associated with the rationality parameters. In \cref{eq:learning_problem1:c1}, we enforce that the strategy profiles $x$ are Nash equilibria of the game induced by $\param$ and the game parameters $\bar{\gameparam}$. In \cref{eq:learning_problem1:obj}, we minimize the average over data points of the squared $L_2$-distance between each data point $\bar{x}^k$ and $x^k$.
The optimization problem \cref{eq:learning_problem1} is nonconvex due to the complementarity constraints. We can obtain an equivalent formulation by explicitly setting either the LHS or the RHS of the complementarity constraints \cref{eq:learning_problem1:c1} to $0$.
For each data point $k$, we define an \emph{active-set} $Z^k \subseteq \{1,\dots,lm\}$, \ie, the set of indices of tight complementarity constraints; whenever $z \in Z^k$, the $z$-th constraint $b(\param, \bar{\gameparam}^k)_{z} - A(\param, \bar{\gameparam}^k)_{z} x^k = 0$ (is tight). Otherwise, if  $z \notin Z^k$, then $\lambda^k_z=0$.
For a given active-set $Z^k$, let $Y^k$ be its complement. Then, problem \cref{eq:learning_problem1} is equivalent to the following minimization problem over~$Z^k, x^k, \lambda^k, \param$,
\begin{subequations}
\label{eq:learning_problem2}
\begin{align}
	 \!\!\!\!\!\!\!\!\!\!\text{minimize} \quad & \textstyle (1/K) \sum_{k=1}^K \parallel x^k - \bar{x}^k \parallel^2_2 \\
	 \!\!\!\!\text{subject\! to} \quad & 0 = R(\param, \bar{\gameparam}^k) x^k + c(\param, \bar{\gameparam}^k) + A(\param, \bar{\gameparam}^k)^T \lambda^k, \label{eq:learning_problem2:c1}\\
    & 0 \leq b(\param, \bar{\gameparam}^k) - A(\param, \bar{\gameparam}^k)x^k, \quad \lambda^k \geq 0, \label{eq:learning_problem2:c2}\\
    & 0 = b(\param, \bar{\gameparam}^k)_{Z^k} - A(\param, \bar{\gameparam}^k)_{Z^k} x^k, \; 0= \lambda^k_{Y^k},\label{eq:learning_problem2:c3} \\ 
     & x^k \in \reals^{m n}, \lambda^k \in \reals^{l n}_+,\quad k = 1, \dots, K, \nonumber \\
    & \param \in \Theta \nonumber,
\end{align}
\end{subequations}
where we use the subscript $Z^k$ to refer to the set of constraints with indices contained in $Z^k \subseteq \{1,\dots,lm\}$, and $Y^k = \{1, \dots lm \} \backslash Z^k$.
In \cref{eq:learning_problem2:c2}, we relax the complementarity constraints of \cref{eq:learning_problem1:c1} by enforcing non-negativity on both the LHS and RHS involved in the complementarity constraints; in \cref{eq:learning_problem2:c3}, we enforce the tightness conditions on the active-sets $Z^k$ for any $k$. Although \cref{eq:learning_problem2} does not include any complementarity constraints, it involves exponentially many choices of $Z^k$ for any $k$. However, given some fixed $Z=(Z^1,\dots,Z^k)$, we can employ \cref{eq:learning_problem2} to formulate the loss function \cref{eq:loss_preliminary} and devise a differentiation method to iteratively refine the estimates of~$\param$.
\para{Implicit differentiation of the loss function} 
\label{sub:implicit}
We exploit the gradient information of the loss function \cref{eq:loss_preliminary} associated with problem \cref{eq:learning_problem2}. Let $x^k=x^k_{Z^k}(\param,\bar{\gameparam}^k)$ (we omit $Z^k$ when the context is clear) be implicitly defined as
\begin{align} 
    \!\!\!\!\!\! F(x^k, \lambda^k\new{; \param}) &=
    \begin{bmatrix} 
    R(\param, \bar{\gameparam}^k) x^k + c(\param, \bar{\gameparam}^k) + A(\param, \bar{\gameparam}^k)^T \lambda^k \\
    b(\param, \bar{\gameparam}^k)_{Z^k} - A_{Z} x^k \\
    \lambda_{Y^k}
    \end{bmatrix} \nonumber \\ &=0 \label{eq:implicitrelation},
\end{align}
\ie, $x^k$ solves $0= F(x^k, \lambda^k\new{; \param})$, or, in other words, $x^k$ is a Nash equilibrium given $Z$.
Therefore, for a given active-set $Z$, we reformulate the loss function \cref{eq:learning_problem2} over $\mathcal{D}$ as
\begin{equation}
{\textstyle \loss(\param, Z; \mathcal{D}) = (1/|\mathcal{D}|) \sum_{\bar{x}^k \in \mathcal{D}} \parallel x_{Z^k}^{k} - \bar{x}^k \parallel^2_2.}
\label{eq:loss}
\end{equation}
We evaluate the gradient of $\loss(\param, Z; \mathcal{D})$ with respect to $\param$ as
\begin{align}
\label{eq:grad}
    {\textstyle \!\!\!\!\!\!\nabla_\param \loss(\param, Z;\mathcal{D}) =\!(1/|\mathcal{D}|) \sum_{\bar{x}^k \in \mathcal{D}} 2\nabla_\param (x_{Z^k}^{k}) ^T( x_{Z^k}^{k} - \bar{x}^k).}
\end{align}
Equivalently, we can compute \cref{eq:grad} by employing the implicit function theorem \citep[Theorem 1B.1]{dontchev_2009}, as in \cref{thm:implicit}.

\begin{theorem}[Implicit function theorem] \label{thm:implicit}
Let $G(\new{\beta})$ be implicitly defined by the relation $0 = H(\new{\alpha, \beta})$ for $H: \reals^{d_1} \times \reals^{d_2} \mapsto \reals^{d_1} $. Assume 
\begin{enumerate*}
\item $H$ is continuously differentiable in a neighborhood of $\new{(\alpha_0, \beta_0)}$, 
\item $H(\new{\alpha_0, \beta_0}) = 0$, and
\item $\nabla_x H(\new{\alpha_0, \beta_0})$ is nonsingular.
\end{enumerate*} Then, $G$ is continuously differentiable in a neighborhood of $(\new{\alpha_0, \beta_0})$ with Jacobian,
\begin{equation*}
    \nabla_{\new{\beta}} G(\new{\beta}) = \nabla_{\new{\alpha}} H(\new{\alpha_0, \beta_0})^{-1} \nabla_{\new{\beta}} H(\new{\alpha_0, \beta_0}).
\end{equation*}
\end{theorem}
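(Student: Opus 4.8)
The plan is to establish this classical result by the Banach fixed-point theorem, which delivers existence, local uniqueness, and regularity of the implicit map in a single framework. Write $M \define \nabla_x H(x_0, \lambda_0)$, which is nonsingular by assumption (iii). For each $\lambda$ near $\lambda_0$, define the parametrized map $T_\lambda(x) \define x - M^{-1} H(x, \lambda)$, and observe that $x$ is a fixed point of $T_\lambda$ if and only if $H(x, \lambda) = 0$, since $M^{-1}$ is invertible. Solving the implicit relation is therefore equivalent to finding fixed points of $T_\lambda$, and the entire argument reduces to controlling how these fixed points depend on $\lambda$.

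First I would show that $T_\lambda$ is a contraction on a small closed ball $\bar{B}(x_0, r)$ for all $\lambda$ in a neighborhood of $\lambda_0$. Differentiating yields $\nabla_x T_\lambda(x) = \identity - M^{-1} \nabla_x H(x, \lambda)$, which vanishes at $(x_0, \lambda_0)$; by continuity of $\nabla_x H$ (assumption (i)), its operator norm can be made smaller than $1/2$ on a sufficiently small neighborhood, so $T_\lambda$ is Lipschitz with constant $1/2$ there. After shrinking the $\lambda$-neighborhood, if necessary, so that $T_\lambda$ maps $\bar{B}(x_0, r)$ into itself — using $H(x_0, \lambda_0) = 0$ from assumption (ii) together with continuity in $\lambda$ — the Banach fixed-point theorem produces a unique fixed point $x = G(\lambda) \in \bar{B}(x_0, r)$. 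This defines $G$ on a neighborhood of $\lambda_0$ and gives the identity $H(G(\lambda), \lambda) \equiv 0$. Continuity, in fact Lipschitz continuity, of $G$ then follows from the uniform contraction estimate: subtracting the fixed-point identities for $\lambda$ and $\lambda'$ and applying the contraction bound isolates $\| G(\lambda) - G(\lambda') \|$ in terms of $\| H(G(\lambda), \lambda) - H(G(\lambda), \lambda') \|$, which is controlled by the Lipschitz dependence of $H$ on $\lambda$.

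The main obstacle, and the step I would treat most carefully, is upgrading continuity to differentiability, since the contraction argument alone gives only Lipschitz regularity. Here I would fix $\lambda$, expand $H$ to first order at $(G(\lambda), \lambda)$, substitute $x = G(\lambda + h)$, and use $H(G(\lambda + h), \lambda + h) = 0$ to solve for the increment $G(\lambda + h) - G(\lambda)$; after inverting the block $\nabla_x H(G(\lambda), \lambda)$ — which remains nonsingular near $(x_0, \lambda_0)$ by continuity of $\nabla_x H$ — the already-established Lipschitz bound on $G$ lets me show the remainder is $o(\| h \|)$, yielding differentiability. Equivalently, once differentiability is known, differentiating the identity $H(G(\lambda), \lambda) \equiv 0$ with the chain rule gives $\nabla_x H \cdot \nabla_\lambda G + \nabla_\lambda H = 0$, and solving for $\nabla_\lambda G$ produces the stated Jacobian $\nabla_\lambda G(\lambda) = \nabla_x H(x_0, \lambda_0)^{-1} \nabla_\lambda H(x_0, \lambda_0)$. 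Finally, continuity of $\nabla_\lambda G$ follows because the matrix inverse is continuous on the nonsingular matrices and is here composed with the continuous maps $\nabla_x H$, $\nabla_\lambda H$, and $G$, which establishes that $G$ is continuously differentiable and completes the argument.
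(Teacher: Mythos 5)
The paper itself gives no proof of this statement: it is quoted (with typos) from Dontchev and Rockafellar, Theorem 1B.1, so there is no internal argument to compare against. Your contraction-mapping proof is the standard route to this classical result, and essentially the proof given in that reference: setting $M = \nabla_x H(x_0,\lambda_0)$, defining $T_\lambda(x) = x - M^{-1}H(x,\lambda)$, establishing a uniform contraction constant on a small ball, invoking Banach's fixed-point theorem to obtain $G$, extracting Lipschitz continuity from the uniform contraction estimate, and then upgrading to continuous differentiability via a first-order expansion is exactly how this theorem is normally established. You also correctly read condition (ii), ``$F(x_0,\lambda_0)=0$,'' as the typo it is for $H(x_0,\lambda_0)=0$.

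There is, however, one concrete error, and it sits at the very last step. From the identity $H(G(\lambda),\lambda)\equiv 0$ the chain rule gives, as you write, $\nabla_x H\cdot \nabla_\lambda G + \nabla_\lambda H = 0$; solving this yields
\begin{equation*}
\nabla_\lambda G(\lambda) = -\,\nabla_x H\bigl(G(\lambda),\lambda\bigr)^{-1}\,\nabla_\lambda H\bigl(G(\lambda),\lambda\bigr),
\end{equation*}
i.e.\ the \emph{negative} of the Jacobian stated in the theorem. Your claim that solving ``produces the stated Jacobian'' is therefore false as written: your (correct) derivation actually contradicts the displayed formula. The missing minus sign is an error in the paper's statement itself (the cited Theorem 1B.1 carries the minus sign), and a careful blind proof should flag the discrepancy rather than assert agreement; the sign is not cosmetic, since downstream the paper uses this Jacobian to form descent directions in \cref{algo:activeset}. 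Two smaller slips you inherit from the statement: the Jacobian should be evaluated at $(G(\lambda),\lambda)$, not frozen at $(x_0,\lambda_0)$, if the formula is to hold for all $\lambda$ near $\lambda_0$; and $G$, being a function of $\lambda$ alone, is continuously differentiable on a neighborhood of $\lambda_0$, not of $(x_0,\lambda_0)$. None of this undermines the body of your argument, but the final line needs the sign corrected, together with a remark that the theorem as printed contains the same error.
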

\new{We want to differentiate $x^k$ and $\lambda_k$ with respect to $\param$, where $x^k$ and $\lambda^k$ are defined implicitly by the relation \cref{eq:implicitrelation}. We, therefore, let $\alpha = (x^k, \lambda^k)$, $\beta = \param$ and $H(\alpha, \beta) = H((x^k, \lambda^k), \param) = F(x^k, \lambda^k; \param)$ so that $H(\alpha, \beta)$ is equal to}

\begin{equation} \label{eq:F}
    \begin{bmatrix}
    R(\param, \bar{\gameparam}^k) & A(\param, \bar{\gameparam}^k)^T \\ -A(\param, \bar{\gameparam}^k)_{Z^k} & 0 \\ 0 & I_{Y^k} 
    \end{bmatrix} \begin{bmatrix}
    x_k \\ \lambda_k
    \end{bmatrix} - 
    \begin{bmatrix}
    c(\param, \bar{\gameparam}^k) \\ b_{Z^k}(\param, \bar{\gameparam}^k) \\ 0
    \end{bmatrix}.
\end{equation}

\new{Using \cref{eq:F}, \cref{thm:implicit} allows us to differentiate $(x^k, \lambda^k)$ with respect to $\theta$, and, therefore,} to differentiate the loss function \cref{eq:loss} with respect to $\param$ for any active-set $Z$ of \cref{eq:learning_problem2}.

\subsection{The algorithm} 
\label{sub:algorithm}
We employ the ingredients of \cref{sub:bilevel,sub:implicit} to create a stochastic first-order method to learn the parameters $\param$ given the dataset $\mathcal{D}$. We formalize our approach in \cref{algo:activeset}. 
Given an instance of a game of \cref{def:game}, the dataset $\mathcal{D}$, a maximum number of iterations $T$, and a series of learning rates $\{\eta_t\}_{t=1}^T$, the algorithm returns the estimated rationality parameters $\param_{T}$.

    \begin{algorithm}
        \caption{Active-set algorithm} \label{algo:activeset}
        \begin{algorithmic}[1]
        
            \REQUIRE The data $\mathcal{D}=\{(\bar{x}^k, \bar{\gameparam}^k)\}^K_{k=1}$, the maximum number of iterations $T$, and step sizes $\{\eta_t\}_{t=1}^T$

            \STATE Initialize $\theta_0$  \label{algo:activeset:gaussian}

            \FOR{$t = 0, \dots, T-1$ \label{algo:activeset:for}}
            
            \STATE Choose $(\bar{x}^k, \bar{\gameparam}^k)$ uniformly from $\mathcal{D}$ \label{algo:activeset:pick}

            \STATE $x^{(t)}, \lambda^{(t)} \gets$ primal and dual variables from \cref{eq:potential} \label{algo:activeset:potential}

            \STATE $Z^{(t)} \gets \{ z : b(\param^{(t)}, \bar{\gameparam}^k)_z - A(\param^{(t)}, \bar{\gameparam}^k)_z x^{(t)} = 0 \}$ \label{algo:activeset:a2}

            \STATE $Y^{(t)} \gets \{z : \lambda^{(t)}_z = 0\}$  \label{algo:activeset:a3}

            \STATE Set $\loss_t(x) \gets \loss(\param^{(t)}, Z^{(t)};\{ \bar{x}^k\})$ \label{algo:activeset:loss}

            \STATE $\nabla_\param \loss_t(x^{(t)}) = \nabla_\param (x^{(t)}(\param^{(t)}))^T \nabla_x \loss_t(x^{(t)}(\param^{(t)}))$  \label{algo:activeset:gradient}

            \STATE $\param^{(t+1)} \gets \param^{(t)} - \eta_t \nabla_\param \loss_t(x^{(t)})$ \label{algo:activeset:update}
            
            \ENDFOR
            
            \RETURN $\param^{(T)}$
        \end{algorithmic}
    \end{algorithm}

After initializing $\theta^{(0)}$ (line \ref{algo:activeset:gaussian}), at each step $t$, the algorithm picks a random datapoint $(\bar{x}^k, \bar{\gameparam}^k)$ from $\mathcal{D}$ with uniform probability (line \ref{algo:activeset:pick}). Given $(\bar{x}^k, \bar{\gameparam}^k)$, the algorithm solves the optimization problem in line \ref{eq:potential} to compute $x^{(t)}$ and $\lambda^{(t)}$, with $x^{(t)}$ being a Nash equilibrium of the game induced by $\param^{(t)}$ and $\bar{\gameparam}^k$ (line \ref{algo:activeset:potential}). Let $Z^{(t)} = \{z :  b(\param^{(t)}, \bar{\gameparam}^k)_z - A(\param^{(t)}, \bar{\gameparam}^k)_z x^{(t)}= 0\}$ be the active-set at $x^{(t)}$, and $Y^{(t)} = \{z :  \lambda_z^{(t)}= 0\}$  (lines \ref{algo:activeset:a2} and \ref{algo:activeset:a3}). 
On the one hand, if $Z^{(t)} \cap Y^{(t)} = \emptyset$, \ie, there is no degeneracy in the complementarities, then the matrix in Step \ref{eq:F} is invertible. We can obtain the gradient of $x^{(t)}$ with respect to $\param^{(t)}$ using \cref{thm:implicit} (line \ref{algo:activeset:gradient}). We use this gradient information to update the estimates for $\param^{(t)}$ to reduce the loss proportionally to $\eta_t$, and we continue to iterate (line \ref{algo:activeset:update}).
On the other hand, if $Z^{(t)} \cap Y^{(t)} \neq \emptyset$, we have a degenerate solution. In the following subsection, we explain how to retrieve gradient information in degenerate active-sets. When the algorithm reaches its iteration limit, it returns the last estimate $\param^{(T)}$.

\para{Degenerate active-set}
Whenever, at some step $t$, $Z^{(t)} \cap Y^{(t)} \neq \emptyset$, we cannot obtain the gradient from \cref{eq:F} since the matrix is not invertible. This implies we must choose an alternative non-degenerate active-set on which we evaluate the gradient \cref{eq:grad}. We do so using the following rule, which is motivated by the theory in \cref{sec:convergence}.

\begin{zrule}
\label{rule:1}
    Suppose we are at iteration $t$ of \cref{algo:activeset}.
    \new{Update $\param^{(t+1)}$ using the gradient $\nabla_\param \mathcal{L}(\tilde{\param}; \{\bar{x}^k\})$, where $\tilde{\param}$ is chosen uniformly on  $\text{B}_\epsilon(\param^{(t)})$, \ie, the \new{Euclidean} ball of radius $\epsilon$.}
    \new{If it is the case that at $\tilde{\param}$,} $Z^{(t)} \cap Y^{(t)} \neq \emptyset$, let $W^{(t)} = Z^{(t)} \cap Y^{(t)}$. Randomly and uniformly partition $W(t)$ into two sets, $W_1^{(t)}$ and $W_2^{(t)}$. Let $\tilde{Z}^{(t)} = Z^{(t)} \backslash W_1^{(t)}$ and $\tilde{Y}^{(t)} = Y^{(t)} \backslash W_2^{(t)}$. Use $\mathcal{L}_t(x) \gets \mathcal{L}(\new{\tilde{\param}}, \tilde{Z}^{(t)}; \{ x_k \})$ in Step \ref{algo:activeset:loss} of \cref{algo:activeset}.
\end{zrule}

Essentially, \new{\cref{rule:1} updates the gradient at $\param^{(t)}$ using the gradient information from a $\tilde{\param}$, a slightly perturbed version of $\tilde{\param^{(t)}}$}. In the presence of degeneracy, \cref{rule:1} randomly partitions the intersection (\ie the elements causing the degeneracy) of $Z$ and $Y$ and uses this active-set to update the gradient in Step \ref{algo:activeset:gradient}. \new{However, we note that due to the perturbation, degeneracy rarely occurs in practice.}
\section{Convergence analysis}
\label{sec:convergence}
In this section, we will provide a proof of the convergence behavior of \cref{algo:activeset}. In particular, we will prove that a smoothed version of the gradient of the loss with respect to $\param^{(t)}$ at iteration $t$ converges to zero (\cref{thm:convergence}), \new{and, therefore, that our algorithm eventually finds either a local minimum of the smoothed loss or a saddle point. While this does not guarantee convergence to a global minimum, since we are providing an algorithm designed to run on a large dataset, we argue that a local minimum can empirically provide a good generalization of $\theta$, as we will show in our computational experiments.}
For notational simplicity, we will let $A(\param, \gameparam)$ be $A$.
First, for any $\param$ and any non-degenerate active-set $Z$ (\ie, $Z \cap Y = \emptyset$), \cref{eq:loss} provides an explicit expression for  $\nabla_\param \mathcal{L}( \param ; Z, \{ \bar{x}_k \})$ at any point $\bar{x}_k \in \mathcal{D}$.
Specifically, $\nabla_\param \mathcal{L} (\param ; Z, \mathcal{D} )$ is given by,
\begin{align*}
\nabla_\param \mathcal{L} (\param ; Z, \mathcal{D} ) &=
    \frac{2}{|\mathcal{D}|} \sum_{k=1}^K  S_k(Z_k, \param)^{-T} \begin{bmatrix}
        x^{k}_{Z_k} - \bar{x}^k \\ 0
    \end{bmatrix}, \\ S_k(Z_k, \param) &= \begin{bmatrix}
    R(\param, \bar{\gameparam}^k) & A^T \\ -A_{Z_k} & 0 \\ 0 & I_{Y_k} 
    \end{bmatrix},
\end{align*}
and the $j$-th row of the Hessian $\nabla^2 \mathcal{L}(\param, Z, \mathcal{D})_j$ is given by,
\begin{equation*}
    \frac{2}{|\mathcal{D}|} \sum_{k=1}^K S_k(Z_k, \param)^{-T}
    \partial_{\param_j} S_k(Z_k, \param) 
    S_k(Z_k, \param)^{-T}
    \begin{bmatrix}
        x^{k}_{Z_k} - \bar{x}^k \\ 0
    \end{bmatrix}.
\end{equation*}
To prove our result, we will assume the following.
\begin{assumption}[Constraint qualifications]\hphantom{.}
\label{ass:2}
\begin{enumerate}
    \item 
The linear independence constraint qualifications (LICQs) hold for the set $\{x \mid b(\param, \bar{\gameparam}^k) - Ax \geq  0\}$ for every $\param \in \Theta$, and $ k=1 \dots K $,
    \item 
the set $\{x \mid Ax \leq b(\param, \gameparam) \}$ is uniformly bounded in $x$ for all $\param$, $\gameparam$, and
    \new{\item for almost all $\param \in \Theta$ (in the Lebesgue-measure sense), $Z_k(\param) \cap Y_k(\param) = \emptyset$ for every $k=1 \dots K$.}
\end{enumerate}
\end{assumption}
\begin{assumption}[Well-behaved derivatives]\hphantom{.}
\label{ass:1} 
\begin{enumerate}
    \item there exists an $L_0 > 0$ such that, $ \|S_k(Z_k, \param)^{-T} \|_\infty < L_0$
    for every $\param \in \Theta, Z_k \subseteq \{1, 2, \dots, lm \}$ and $k = 1, \dots, K$, and
    \item the function $\partial_{\param_j} R(\param, \bar{\gameparam}^k)$ is bounded, namely, there exists an $L_3 > 0$ such that for all $k=1, \dots, K$ and all $\param \in \Theta$, $ \| \partial_{\param_j} R(\param, \bar{\gameparam}^k) \|_{\infty} \leq L_3$.
\end{enumerate}
\end{assumption}
\cref{ass:1,ass:2} are satisfied in several practical applications; for instance, in \cref{prop:assumptions}, we prove these assumptions hold for the games we consider in \cref{sec:experiments}.
\begin{proposition}
\label{prop:assumptions}
    Suppose that $b(\param, \gameparam)=b(\gameparam)$ has no dependency on $\param$, $A$ is constant, and $R$ and $c$ are affine functions of $\param$, \ie,
    \begin{align*}
        R(\param, \gameparam) &= R_0(\gameparam) + \textstyle \sum_{i=1}^p R_i(\gameparam) \param_i, \quad
        c(\param, \gameparam) = C(\gameparam) \param,
    \end{align*}
    with $R_0(\gameparam) \succ 0 $ and $R_i(\gameparam) \succeq 0$ for $i=1, \dots, p$.  If \cref{ass:2} holds, then \cref{ass:1} holds.
\end{proposition}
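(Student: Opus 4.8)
The plan is to verify the three conditions of \cref{ass:1} in turn, exploiting that the number of data points $K$ is finite and that, for each $k$, the number of candidate active-sets $Z_k \subseteq \{1,\dots,lm\}$ is finite, so that every ``uniform over $\param$, $Z_k$, $k$'' bound reduces to a finite maximum of bounds uniform in $\param$ alone. Throughout I take $\Param$ to be compact and contained in $\reals^p_+$, so that $R(\param,\bar{\gameparam}^k) = R_0(\bar{\gameparam}^k) + \sum_{i=1}^p R_i(\bar{\gameparam}^k)\param_i \succeq R_0(\bar{\gameparam}^k) \succ 0$ for every $\param \in \Param$ and every $k$; in particular $\lambdamin(R(\param,\bar{\gameparam}^k)) \ge \min_k \lambdamin(R_0(\bar{\gameparam}^k)) =: r_{\min} > 0$. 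Condition~3 is then immediate: since $R$ is affine in $\param$, $\partial_{\param_j} R(\param,\bar{\gameparam}^k) = R_j(\bar{\gameparam}^k)$ is independent of $\param$, so $L_3 \define \max_{j,k}\|R_j(\bar{\gameparam}^k)\|_\infty$ works.

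For condition~1, I would first argue that $S_k(Z_k,\param)$ is nonsingular for every realizable active-set. By \cref{ass:2} LICQ holds at every feasible point, so for any $Z_k$ active at some feasible $x$ the rows $A_{Z_k}$ are linearly independent, \ie, $A_{Z_k}$ has full row rank. Eliminating the pinned block $\lambda_{Y_k}=0$ reduces $S_k(Z_k,\param)$ to the saddle-point matrix $\left[\begin{smallmatrix} R & A_{Z_k}^T \\ -A_{Z_k} & 0\end{smallmatrix}\right]$, which is invertible whenever $R \succ 0$ and $A_{Z_k}$ has full row rank (the standard argument: $A_{Z_k}u=0$ and $Ru + A_{Z_k}^T v = 0$ force $u^T R u = 0$, hence $u=0$ and then $v=0$). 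Given invertibility, $\param \mapsto \|S_k(Z_k,\param)^{-T}\|_\infty$ is continuous on $\Param$ (matrix inversion is continuous where defined, and $S_k$ is affine in $\param$), hence attains a finite maximum on the compact set $\Param$; taking the maximum over the finitely many pairs $(k,Z_k)$ with $A_{Z_k}$ of full row rank yields the uniform constant $L_0$.

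The main obstacle is condition~2: strict complementarity, $Z_k(\param)\cap Y_k(\param)=\emptyset$, for almost every $\param$. Because $A$ and $b(\bar{\gameparam}^k)$ are independent of $\param$, the feasible polyhedron of \cref{eq:potential} is a fixed set that is nonempty (it contains the data) and, by \cref{ass:2}, bounded; together with $R\succ 0$ this makes $\Phi$ strictly convex and guarantees a unique minimizer $x^k(\param)$ for every $\param$. I would decompose $\Param$ into critical regions $\Param_{Z} = \{\param : x^k(\param) \text{ has active-set exactly } Z\}$. On each region the solution solves $S_k(Z,\param)\,[x^k;\lambda^k]=r_Z(\param)$ with $S_k$ and $r_Z$ affine in $\param$ (here I use that $A,b$ are constant and $R,c$ affine), so by Cramer's rule $\param\mapsto x^k_Z(\param)$ and $\param\mapsto\lambda^k_Z(\param)$ are rational, hence real-analytic wherever $\det S_k(Z,\param)\neq 0$, which by condition~1 is all of $\Param_Z$. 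Degeneracy at $\param$ means some index $z$ is simultaneously active and satisfies $\lambda^k_z(\param)=0$, \ie, $\param$ lies in the zero-set of the real-analytic map $\param\mapsto\lambda^k_z(\param)$ inside $\Param_Z$. Since a real-analytic function that is not identically zero has a zero-set of Lebesgue measure zero, and there are finitely many triples $(k,Z,z)$, the degenerate set is a finite union of measure-zero sets, hence measure zero.

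The delicate point in this last step, and the one I would treat most carefully, is ruling out $\lambda^k_z\equiv 0$ on a full-dimensional region, where the zero-set argument is vacuous. This corresponds to a constraint that is tight throughout $\Param_Z$ yet carries an identically-zero multiplier; but then the reduced active-set $Z\setminus\{z\}$ produces the identical primal solution with a nonsingular $S_k$ on that region, so I would simply replace $Z$ by its reduction and re-run the argument (precisely the non-degenerate relabeling performed by \cref{rule:1}). Equivalently, identical tightness would force the $\param$-varying unconstrained minimizer $-R(\param,\bar{\gameparam}^k)^{-1}c(\param,\bar{\gameparam}^k)$ onto the fixed hyperplane $A_z x = b_z(\bar{\gameparam}^k)$ for a positive-measure set of $\param$, which the rational dependence on $\param$ excludes off a measure-zero set. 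With this case removed, the measure-zero conclusion follows and \cref{ass:1} holds.
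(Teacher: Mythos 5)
You prove more than the paper does: the paper's own proof establishes only condition 1 of \cref{ass:1} (the uniform bound on $\| S_k(Z_k,\param)^{-T}\|_\infty$) and is silent on conditions 2 and 3, whereas you address all three. On condition 1 your route is genuinely different. The paper proves invertibility of $S_k(Z_k,0)$ by a Schur-complement argument (LICQ makes $A_{Z_k}R(0,\bar{\gameparam}^k)^{-1}A_{Z_k}^T$ invertible) and then claims a bound valid on all of $\Theta$ via the eigenvalue inequality $\lambda_{\min}(S_k(Z_k,\param)^T S_k(Z_k,\param)) \ge \lambda_{\min}(M_0^T M_0)$; you instead prove pointwise invertibility by the standard saddle-point argument and appeal to continuity plus compactness. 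The trade-off: the paper's argument (granting its unproven claim that the $\param$-dependent terms added to $M_0^T M_0$ are positive semidefinite) needs no compactness, while yours is rigorous but only under two hypotheses you added that are not in the statement, namely $\Theta$ compact and $\Theta \subseteq \reals^p_+$. Both proofs tacitly restrict the quantifier ``every $Z_k \subseteq \{1,\dots,lm\}$'' in \cref{ass:1} to active-sets realized at feasible points, since LICQ gives full row rank of $A_{Z_k}$ only for those.

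The genuine gap is the last step of your condition 2 argument, where you must exclude a multiplier $\lambda^k_z$ that vanishes identically on a full-dimensional critical region, and neither of your two fixes works. Relabeling the active-set (as in \cref{rule:1}) is irrelevant: condition 2 of \cref{ass:1} concerns $Z_k(\param)$ and $Y_k(\param)$ at the solution of \cref{eq:potential}, \ie, which constraints are tight and which multipliers vanish there, and no relabeling of the set used for differentiation changes that. Your ``equivalently'' also misidentifies which point is forced onto the hyperplane: when $\lambda^k_z \equiv 0$, the point lying on $\{A_z x = b_z(\bar{\gameparam}^k)\}$ is the minimizer of the problem reduced to the active-set $Z\setminus\{z\}$, not the unconstrained minimizer (these coincide only when $Z=\{z\}$). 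Most importantly, the assertion that ``rational dependence on $\param$ excludes'' identical vanishing is unjustified: a rational function can be identically zero, and nothing in the hypotheses prevents it. In fact, the implication you are trying to establish is false as stated: take $C(\gameparam)=0$ (so $c \equiv 0$), $R(\param,\gameparam)=I$, and the feasible box $\{x : 0 \le x_j \le 1\}$. All hypotheses of \cref{prop:assumptions} and \cref{ass:2} hold, yet for every $\param$ the minimizer of \cref{eq:potential} is $x^k(\param)=0$, the constraints $-x_j \le 0$ are tight, and every multiplier is zero, so $Z_k(\param)\cap Y_k(\param)\neq\emptyset$ for all $\param \in \Theta$. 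Condition 2 therefore requires a genericity assumption beyond those stated; this hole is invisible in the paper only because its proof never attempts condition 2.
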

\begin{proof}
First, we prove that, under our assumptions, $S_k(Z_k, 0)$ is an invertible matrix. Otherwise, there would exist an $(x_0, \lambda_0)$ such that $S_k(Z_k, 0) (x_0, \lambda_0)^T = 0$ and therefore $x_0 = - R(0, \bar{\gameparam}^k)^{-1} A^T \lambda_0$, meaning that,
\begin{equation*}
    \begin{bmatrix}
        A_{Z_k} R(0, \bar{\gameparam}^k)^{-1} A^T \\
        I_{Y_k}
    \end{bmatrix} \lambda_0 = 0.
\end{equation*}
This is equivalent to,
\begin{equation*}
    {\lambda_0}_{Y_k} = 0, \quad A_{Z_k} R(0, \bar{\gameparam}^k)^{-1} A_{Z_k}^T{\lambda_{0Z_k}} = 0,
\end{equation*}
and since, by \cref{ass:2}, the matrix $A_{Z_k} R(0, \bar{\gameparam}^k)^{-1} A_{Z_k}^T$ is invertible. This implies that $\lambda_0 = 0$ and $x_0=0$, which is a contradiction.
Now we will prove that the infinity norm of $S_k(Z_k, \param)$ is bounded. Let $M_0$ and $M_1(\param)$ be defined as
\begin{equation*}
    M_0 = \begin{bmatrix}
        R_0(\gameparam) & A^T \\ -A_Z & 0 \\ 0 & I_Y
    \end{bmatrix}
     \quad M_1(\param) = \begin{bmatrix}
         \sum_{i=1}^p R_i(\gameparam) \param_i & 0 \\ 0 & 0 \\ 0 & 0
     \end{bmatrix},
\end{equation*}
so that $S_k(Z_k, \param) = M_0 + M_1(\param)$. Then
\begin{align}
    S_k(Z_k, \param)^T S_k(Z_k, \param) &= M_0^T M_0 + \textstyle \sum_{i=1}^p \param_i^2 R_i^T R_i \nonumber \\ & \quad + \textstyle \sum_{i=1}^p \param_i (R_i^T R_0 + R_0^T R_i).
    \label{eq:proof:1}
\end{align}
Since $R_i$ is positive semi-definite, $R_0$ is positive-definite, and $M_0$ is an invertible matrix, \cref{eq:proof:1} is positive-definite. Furthermore, its smallest eigenvalue is at least as large as the minimum eigenvalue $\epsilon_0$ of $M_0^T M_0$. Therefore, the largest eigenvalue of $(S_k(Z_k, \param)^T S_k(Z_k, \param))^{-1}$ is $1/\epsilon_0$ and so the greatest singular value of $S_k(Z_k, \param)^{-T}$ is $1/\sqrt{\epsilon_0}$. This means that the infinity norm of $S_k$ must be bounded. 
\end{proof}
We remark that $\nabla_\param \mathcal{L}(\tilde{\param}; \{\bar{x}^k\})$ is well-defined by \cref{ass:1} because $\tilde{\param}$ is a continuous random variable and the gradient is defined almost-everywhere.
For any fixed $\epsilon$ and $\param_0$, let $g$ be,
\begin{equation*}
     \textstyle g(\param) = (1/K) \sum_{k=1}^K \int_{\param_0}^\param \int_{\text{B}_\epsilon(v)} \frac{1}{\pi \epsilon^2} \nabla \mathcal{L}(u ; \{ \bar{x}^k \}) \text{d}u \text{d}v,
\end{equation*}
where the outer integral can be taken along any path from $\param_0$ to $\param$ since $\nabla \mathcal{L}(u; \{ \bar{x}^k \})$ is potential. $g$ can then be interpreted as a smoothed version of $\mathcal{L}$. We will show that \cref{algo:activeset} with the update rule \ref{rule:1} is equivalent to performing gradient descent on the smooth, nonconvex function $g$; this will allow us to provide our main convergence result.

\begin{theorem}
    For appropriately given step-sizes $\{\eta_t\}_{t=1}^T$ \new{given by $\eta_t = 1/t^{(3/4)}$,} the active-set algorithm with gradient Rule \ref{rule:1} satisfies $\lim_{T \rightarrow \infty}\Expect[\| \nabla g(\param_T) \|_2] = 0$.
    \label{thm:convergence}
\end{theorem}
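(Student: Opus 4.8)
The plan is to recognize \cref{algo:activeset} equipped with \cref{rule:2} as stochastic gradient descent on the smoothed objective $g$, and then invoke a standard convergence result for nonconvex SGD. First I would establish the basic properties of $g$. Since $\nabla \loss(u;\{\bar{x}^k\})$ is a gradient field, hence conservative, the outer line integral defining $g$ is path-independent, so $g$ is well-defined and its gradient is the ball-averaged loss gradient,
\begin{equation*}
\nabla g(\param) = \frac{1}{K}\sum_{k=1}^K \frac{1}{\pi\epsilon^2}\int_{\text{B}_\epsilon(\param)}\nabla\loss(u;\{\bar{x}^k\})\,\text{d}u .
\end{equation*}
By \cref{ass:1} (item 2), the degenerate set $\{\param : Z_k(\param)\cap Y_k(\param)\neq\emptyset\}$ has Lebesgue measure zero, so the integrand is defined almost everywhere and the average is meaningful. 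A change of variables shows that $g$ coincides, up to an additive constant, with the convolution of the loss itself against the uniform kernel on $\text{B}_\epsilon$; since the loss is nonnegative, $g$ is bounded below.

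Next I would prove that $g$ has a Lipschitz gradient. Writing $\nabla g(\param) - \nabla g(\param')$ as the integral of $\nabla\loss$ against the difference of ball indicators $\mathbf{1}_{\text{B}_\epsilon(\param)} - \mathbf{1}_{\text{B}_\epsilon(\param')}$, its support is the symmetric difference of the two balls, whose volume is $O(\|\param-\param'\|_2)$. Using the explicit gradient formula together with the uniform bound $\|S_k(Z_k,\param)^{-T}\|_\infty < L_0$ from \cref{ass:1} and the uniform boundedness of the feasible set from \cref{ass:2} (which bounds $\|x^k_{Z_k}-\bar{x}^k\|$), the integrand $\nabla\loss$ is uniformly bounded, and the symmetric-difference estimate then yields $\|\nabla g(\param)-\nabla g(\param')\|_2 \le L\|\param-\param'\|_2$ for some constant $L$.

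Then I would identify the algorithm's update as unbiased SGD on $g$. At iteration $t$, \cref{rule:2} draws $\tilde\param\sim\text{Unif}(\text{B}_\epsilon(\param^{(t)}))$ together with a data index $k$ uniformly, and steps along $\hat{g}_t \define \nabla_\param\loss(\tilde\param;\bar{x}^k)$. Taking the conditional expectation over both sources of randomness recovers exactly the ball-averaged gradient above, so that $\Expect[\hat g_t \mid \param^{(t)}] = \nabla g(\param^{(t)})$; the same bounds used for Lipschitzness give $\Expect[\|\hat g_t\|_2^2\mid\param^{(t)}] \le G^2$. Hence $\param^{(t+1)} = \param^{(t)} - \eta_t \hat g_t$ is SGD on the $L$-smooth, bounded-below, nonconvex function $g$ with unbiased stochastic gradients of bounded second moment.

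Finally, with step sizes satisfying the Robbins–Monro conditions $\sum_t\eta_t=\infty$ and $\sum_t\eta_t^2<\infty$, I would combine the descent lemma for the $L$-smooth $g$ with unbiasedness and the bounded second moment to obtain an almost-supermartingale inequality for $g(\param^{(t)})$. The Robbins–Siegmund theorem then yields that $g(\param^{(t)})$ converges almost surely and that $\sum_t\eta_t\|\nabla g(\param^{(t)})\|_2^2 < \infty$ almost surely, whence $\liminf_t\|\nabla g(\param^{(t)})\|_2 = 0$. Since $\nabla g$ is bounded and $L$-Lipschitz and $\eta_t\to 0$, consecutive gradient norms change by at most $L\eta_t\|\hat g_t\|_2\to 0$, which together with summability upgrades the $\liminf$ to $\|\nabla g(\param^{(t)})\|_2\to 0$ almost surely; the bounded convergence theorem then gives $\lim_{T\to\infty}\Expect[\|\nabla g(\param_T)\|_2]=0$. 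I expect the main obstacle to be the Lipschitz-gradient step: $\nabla\loss$ is genuinely discontinuous across active-set boundaries, so smoothness of $g$ must be extracted entirely from the convolution, making the symmetric-difference volume estimate and the uniform boundedness granted by \cref{ass:1,ass:2} the crux of the argument; the passage from $\liminf$ to the full $\lim$ is the secondary delicate point.
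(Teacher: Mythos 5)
Your proposal is correct, and its skeleton is the same as the paper's: interpret \cref{algo:activeset} with \cref{rule:2} as stochastic gradient descent on the smoothed function $g$, establish unbiasedness ($\Expect[\hat g_t \mid \param^{(t)}] = \nabla g(\param^{(t)})$), bounded second moment, and smoothness of $g$, then conclude by nonconvex SGD convergence. The genuine divergence is in how smoothness of $g$ is obtained, and here your route is arguably stronger. The paper bounds $\nabla^2 \mathcal{L}$ by $\pm L_2$ on each fixed active set and then asserts $\nabla^2 g(\param) = (1/K)\sum_{k}(1/(\pi\epsilon^2))\int_{B_\epsilon(\param)} \nabla^2\mathcal{L}(u;\{\bar x^k\})\,\mathrm{d}u$, concluding $-L_2 \preceq \nabla^2 g \preceq L_2$. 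That identity tacitly requires $\nabla\mathcal{L}$ to be weakly differentiable; as you point out, $\nabla\mathcal{L}$ genuinely jumps across active-set boundaries, so the distributional Hessian of the smoothed loss also carries surface terms that the pointwise-a.e.\ average omits. Your symmetric-difference-of-balls estimate sidesteps this entirely: it uses only the uniform gradient bound $L_1$ (which follows from \cref{ass:1,ass:2} exactly as in the paper) and yields a Lipschitz constant for $\nabla g$ of order $L_1/\epsilon$, valid no matter how $\nabla\mathcal{L}$ behaves on the measure-zero degenerate set. The price is an $\epsilon$-dependent smoothness constant; the gain is that the convolution, not a questionable Hessian identity, carries the argument. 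The second difference is the endgame: the paper stops at ``this mirrors SGD on a smooth nonconvex function'' and cites a standard result, while you rederive convergence (Robbins--Siegmund, the crossing argument upgrading $\liminf$ to $\lim$, then bounded convergence), and you also note explicitly that $g$ is bounded below --- a hypothesis the standard theorems need but the paper never checks. This buys a self-contained last-iterate guarantee matching the stated conclusion $\lim_{T\to\infty}\Expect[\|\nabla g(\param_T)\|_2]=0$, which off-the-shelf results often give only for $\liminf$ or a randomly selected iterate.
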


\begin{proof}

We will begin by bounding relevant derivatives of $g$. Since terms in the relevant expressions for each derivative are bounded (using the boundedness of $S_k(S_k, \param)$ and 
$\partial_\param R(\param, \bar{\gameparam}^k)$), there exist some constants $L_1$ and $L_2 > 0$ such that, for all $k=1 \dots K$ and $Z \subseteq \{1, \dots, lm \}$,
\begin{equation*}
     \| \nabla \mathcal{L} (\param; Z, \{ X_k\}) \|_2 \leq L_1, \;\; -L_2 \preceq \nabla^2 \mathcal{L}(\param, Z, \{ X_k \}) \preceq L_2.
\end{equation*}
Note that $\Expect [\nabla\mathcal{L}(\tilde{\param})] = \nabla g(\param)$, where the randomness stems from the choice of the datapoint and the update \cref{rule:1}.
By the claim, $g$ is Lipschitz continuous with constant $L_1$ since,
\begin{align*} 
    \| \nabla g(\param) \|_2 &= {\textstyle \big\| (1/K) \sum_{k=1}^K \frac{1}{\pi \epsilon^2} \int_{\text{B}_\epsilon(\param)} \nabla \mathcal{L}(u ; \{ X_k \} ) \text{d}u \big\|_2} \\ 
    &\leq  {\textstyle (1/K) \textstyle \sum_{k=1}^K  \frac{1}{\pi \epsilon^2} \int_{\text{B}_\epsilon(\param)}\| \nabla \mathcal{L}(u ; \{ X_k \} ) \|_2 \text{d}u} \\ &\leq L_1.
\end{align*}
This and the Cauchy-Schwarz inequality also imply that,
\begin{equation*}
        \Expect [\| \nabla \tilde{\mathcal{L}}(\param) - \nabla g(\param) \|_2^2] \leq 4 L_1^2.
\end{equation*}
Since $\mathcal{L}$ is twice-differentiable on any $Z$, we have that
\begin{equation*}
    \textstyle \nabla^2 g(\param) = (1/K) \sum_{k=1}^K (1/(\pi \epsilon^2)) \int_{\text{B}_\epsilon(\param)} \nabla^2 \mathcal{L}(u ; \{ X_k \} ) \text{d}u. 
\end{equation*}
Since $\nabla^2 g$ is an average of $\nabla^2 \mathcal{L}$, we have that $-L_2 \preceq \nabla^2 g(\param) \preceq L_2$.
Using the facts above, the active-set algorithm with gradient \cref{rule:1} mirrors a stochastic gradient descent on $g$, a Lipschitz-continuous function with bounded second derivatives. We can then use a standard theorem about the convergence of stochastic gradient descent on smooth nonconvex functions such as \new{\cite[proposition 3]{convergence}}.
\end{proof}
Using the definition of $g$ again, we have shown that,
\begin{equation*} \textstyle
    \lim_{T \rightarrow \infty }\Expect_{\param^{(T)}} [ \| \Expect_\mathcal{U} [\nabla_\param \mathcal{L}(\param^{(T)} + \mathcal{U}; \mathcal{D})] \|_2 ] = 0,
\end{equation*}
where the outer expectation is taken over the steps of the algorithm, and the inner expectation is taken over the uniform distribution on the Euclidean ball $\mathcal{U} \sim U[B_\epsilon(0)]$.
\section{Computational results}
\label{sec:experiments}
In this section, we showcase the effectiveness of our algorithm via some computational experiments. We run the experiments on a \emph{Intel Xeon Gold 6246R} with $16$GB of RAM, and we use \emph{Gurobi 9.5} \cite{gurobi} as the optimization solver. For each experiment, we generate a training dataset $\mathcal{D}_{\text{train}}$ with $90$ samples, and a test dataset for evaluation $\mathcal{D}_{\text{test}}$ with $10$ datapoints. In both the datasets, the datapoints $\bar{x}_k$ are Nash equilibria given $\new{\bar{\param}}, \bar{\gameparam}_k$ with the addition of an independent Gaussian error vector $\epsilon_k \sim \mathcal{N}(0,\sigma)$ with $\sigma=0.001$. \new{We use a smoothing parameter $\epsilon=0.01$ for \cref{rule:1}.} We remark that the true rationality parameters $\new{\bar{\param}}$ are unknown. To assess the performance of \cref{algo:activeset} given the estimated parameters $\param^\star$, we employ the test error
\begin{equation*}
    \textstyle \sqrt{(1/| \mathcal{D}_{\text{test}}|) \sum_{(\bar{\mu}_k, \bar{x}_k) \in \mathcal{D}_{\text{test}}} \|x(\param^{\star}; \bar{\gameparam}_k) - \bar{x}_k \|_2^2}.
\end{equation*}

\subsection{Cournot Games} 
\label{p:cournot}
A generalized Cournot game is a simultaneous non-cooperative game where a series of agents decide the quantity of a homogeneous product to inject into a market. For every agent $i$, let the variable $x_i\ge 0$ represent the quantity of product produced, and the parameter $c_i \in \reals$ be the unit-cost of production. The price of the product is determined by the \emph{inverse demand function} $F(x) = a - b \sum_{j=1}^n x_j$, where $\param=(a,b)$ and $a,b \in \reals$. The utility (cost) function of $i$ is
\begin{equation*}
    { \textstyle u_i(x_i; x_{-i}, \param, \gameparam) = - F (x) x_i + c_i x_i} ,
\end{equation*} 
\ie, the revenues due to selling $x_i$ units at price $F$ minus the production cost $c_i x_i$. We define $\gameparam = (c_1,\dots,c_n)$ and the potential function $\Phi(x; \param, \gameparam)$ as
\begin{equation*}
    \textstyle c^T x + b x^2 + b \sum_{1 \leq i < j \leq n} x_i x_j - a^T x.
\end{equation*}
We consider $n=30, 50$, $70$, and $100$ agents, and we randomly generate $\bar{\param}_k$ and $\bar{\gameparam}_k$ according to the modulus of a Gussian distribution $\mathcal{N}(0,\sigma)$ with $\sigma=1$. We report the test error for the Cournot instances in the top half of \cref{fig:testerrors}. In all three cases, \cref{algo:activeset} learns accurate rationality parameters $\param$ inducing low test errors. Furthermore, in \cref{fig:cournot}, we compare the running times and the test errors of \cref{algo:activeset} with Gurobi solving the nonconvex problem \cref{eq:learning_problem1}. Since the solver runs out of time on our smallest instance with $n=50$, we generated some smaller instances with $n \in \{5,8,10,20,30,50\}$. We also reduced the size of the dataset, making it equal to the number of players, because Gurobi could not solve the problem with 100 data points in a reasonable time for any number of players.  While Gurobi times out (time limit of $2000$ seconds) whenever $n\geq10$, our algorithm solves all the instances. 

\begin{figure} 
    \vspace{3em}
    \centering
    \!\!\!\!\!\! 
    \ifpreprint
    \includegraphics[width=0.85\textwidth]{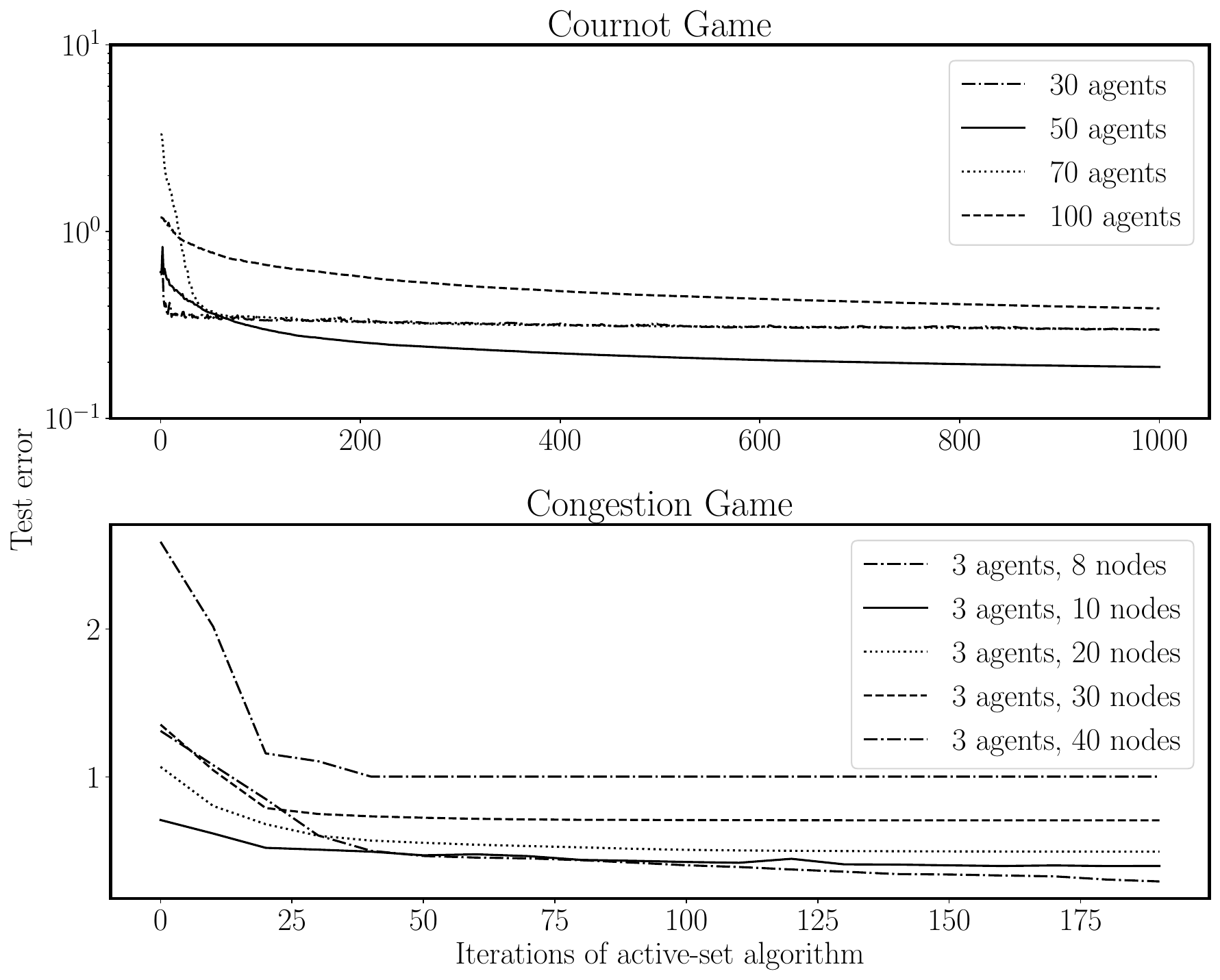}
    \else
    \includegraphics[width=0.45\textwidth,trim={0 1cm 0 2cm}]{data/lines.pdf}
    \fi 
    \caption{Test error for the Cournot and congestion games.} \label{fig:testerrors}
    \vspace{-1em}
\end{figure}

\begin{figure}
    \centering
    \ifpreprint
    \includegraphics[width=0.8\textwidth]{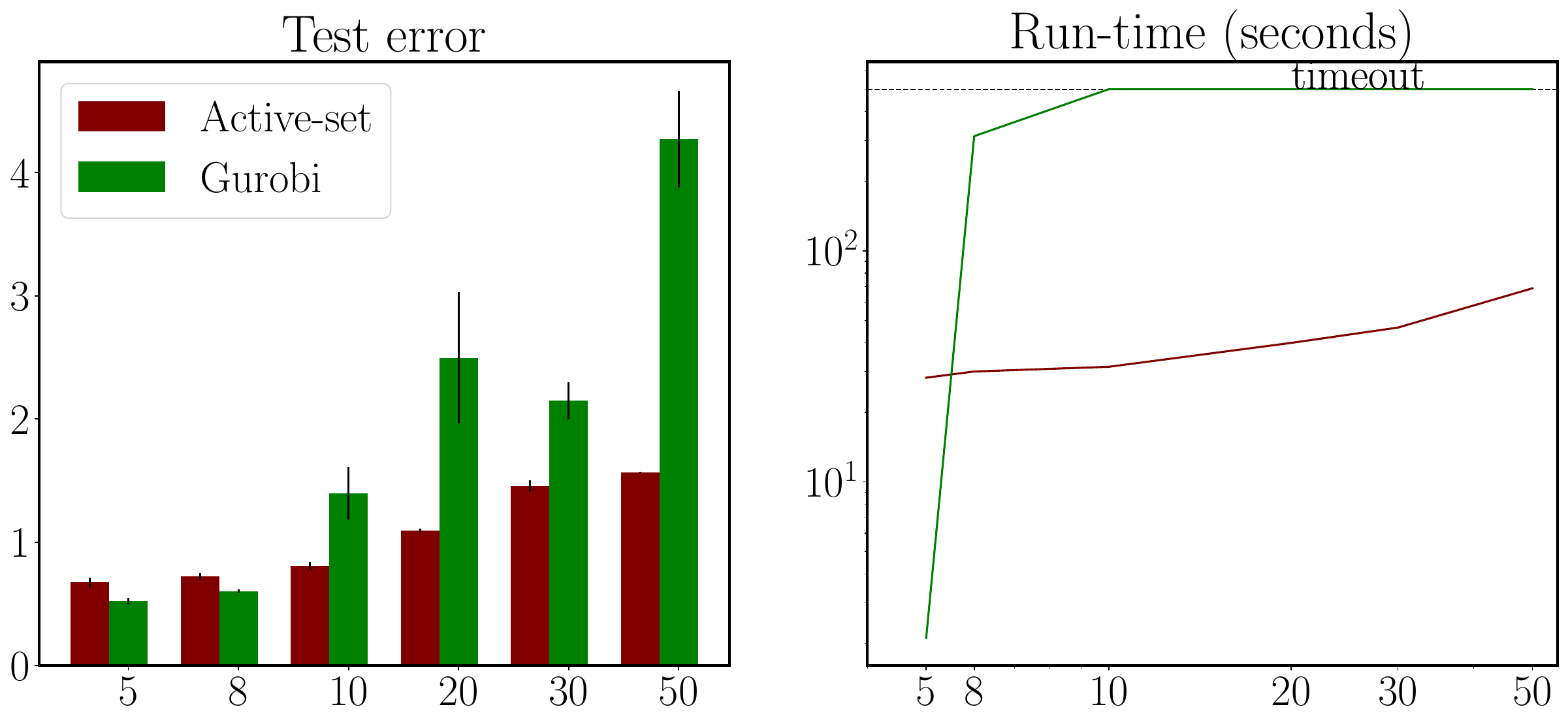}
    \else
    \includegraphics[width=0.40\textwidth,trim={0 2cm 0 0 }]{data/gurobi.pdf}
    \fi
    \caption{Comparison to Gurobi for the Cournot game.}
    \label{fig:cournot}
     \ifpreprint \else
    \vspace{-2em}
    \fi
\end{figure}

\subsection{Congestion games} 
We consider a class of congestion games \citep{rosenthal_class_1973} defined over a direct graph $G = (V, E)$, where $V$ is the set of vertices and $E$ is the set of edges. Each agent $i$ has to route $d_i \in \reals$ units of traffic from a node $s_i \in V$ to a destination $t_i \in V$. Let the variable $x_{ie}$ represent the amount of traffic agent $i$ routes through edge $e \in E$, and $C_e \in \reals$ be the cost parameter for edge $e$. 
Let $C=(C_{e_1}, \dots C_{e_{|E|}})$ be the vector of edge costs and $L\in \reals^{|E| \times p}$ be a matrix of factors relating to each edge (\eg, the length, width, and elevation of a road or the frequency of public transport). We assume that $C = L \param$ for some unknown rationality parameters $\param \in \reals^p$ and $\mu = {(s_i, t_i, d_i)}_{i \in I}$. All considered, each agent $i$ minimizes 
\begin{align*}
    u_i(x_i; x_{-i}, \param, \gameparam) &= \textstyle 
 \sum_{e \in E} C_e x_{ie} (x_{1e} + \dots + x_{ne}),
\end{align*}
subject to the standard flow constraints on $G$. The potential function of the game is
\begin{equation*}
    { \textstyle \Phi(x; \param, \gameparam) = \sum_{e \in E} C_e \sum_{i=1}^n ( (1/2) x_{ie}^2 + \sum_{j \neq i} x_{ie} x_{je} ) }.
\end{equation*}
We generate instances with $n=3$ and $|V| \in \{8,10,20,30,40\}$. The graphs $G$ are Erdős–Rényi random graphs where each edge has a probability $p=0.3$ of being present. Finally, we randomly generate $\bar{\param}_k$ according to the modulus of a Gussian distribution $\mathcal{N}(0,\sigma)$ with $\sigma=1$. In \cref{fig:testerrors}, we report the test errors on the congestion games. Similarly to the Cournot games, our algorithm converges to small test errors. In \cref{fig:toyexp}, we provide a small toy example on a congestion game with $n=2$.
\begin{figure}
    \vspace{0.5em}
    \centering
    \ifpreprint
    \includegraphics[width=0.8\textwidth, trim={5cm 2cm 0 2cm}]{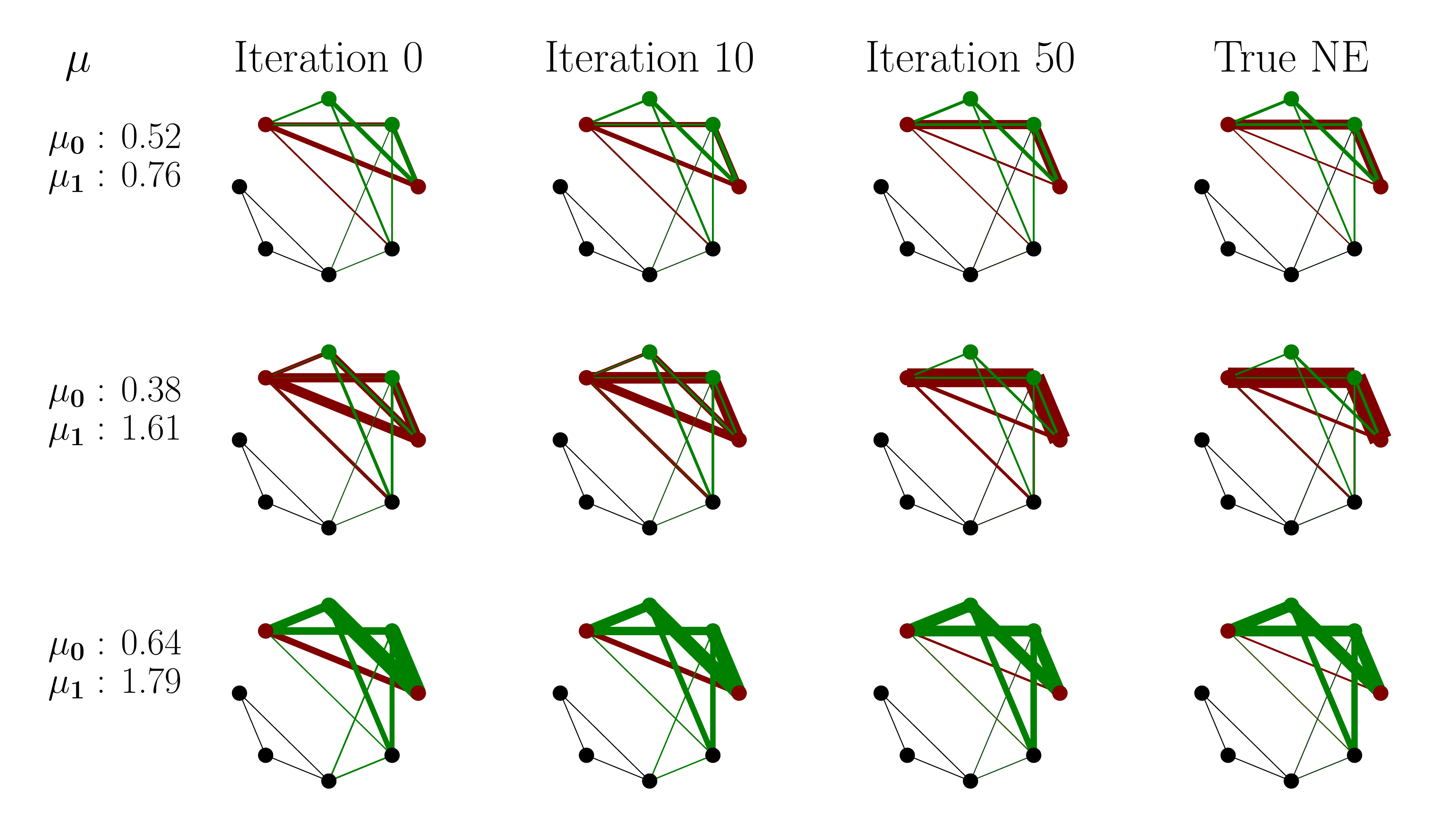}
    \else
    \includegraphics[width=0.45\textwidth, trim={5cm 2cm 0 2cm}]{data/graphs.pdf}
    \fi
    \caption{Three instances with different $\gameparam$ of a small congestion game with $n=2$ among the red and the green agent. The edge thickness represents the amount of traffic routed through the edge. \emph{True NE} represents the ground-truth Nash equilibrium.}
    \label{fig:toyexp}
    \vspace{-2em}
\end{figure}

\section{Conclusions}
We introduced a stochastic first-order algorithm to estimate the parameters of potential games. Our active-set-based algorithm alternates between an active-set step enforcing that the parameters induce Nash equilibria and an implicit-differentiation step to update the estimates of the parameters. From a theoretical perspective, we proved that our algorithm is guaranteed to converge. From a practical perspective, we demonstrated the efficiency of our technique via computational tests on Cournot and congestion games.

\ifpreprint
\newpage
\else
\balance
\fi

\bibliography{bibliography}

\end{document}